\newtheorem{thm}{Theorem}[section]
\newtheorem{lem}[thm]{Lemma}
\newtheorem{prop}[thm]{Proposition}
\newtheorem{coro}[thm]{Corollary}
\newtheorem*{mainthm}{Main Theorem}
\theoremstyle{definition}
\theoremstyle{remark}
\newtheorem{remark}[thm]{Remark}
\numberwithin{equation}{section}
\newcommand{\qf}[1]{\mbox{$\langle #1\rangle $}}
\newcommand{\pff}[1]{\mbox{$\langle\!\langle #1
\rangle\!\rangle $}}
\newcommand{\qpf}[1]{\mbox{$\langle\!\langle #1 ]]$}}
\newcommand{\HH}{{\mathbb H}}
\newcommand{\NN}{{\mathbb N}}
\renewcommand{\phi}{\varphi}
\DeclareMathOperator{\rad}{rad}
\DeclareMathOperator{\ani}{an}
\begin{document}

\title[Witt kernels of quadratic forms]{Witt kernels of quadratic forms 
for multiquadratic extensions in 
characteristic $2$}

\author{Detlev W.\ Hoffmann}
\address{Fakult\"at f\"ur Mathematik, Technische Universit\"at Dortmund, 
44221 Dortmund, Germany}
\email{detlev.hoffmann@math.tu-dortmund.de}

\thanks{The research on this paper has been supported in part by the 
DFG Projekt ``Annihilators and kernels in Kato's cohomology in positive 
characteristic and in Witt groups in characteristic $2$''.}

\subjclass[2010]{Primary 11E04; Secondary 11E81 12F15}

\date{}

\keywords{Quadratic form, bilinear form, Pfister form, Witt group,
excellent extension, purely inseparable extension, 
exponent of an inseparable extension}

\begin{abstract}
Let $F$ be a field of characteristic $2$ and let $K/F$ be a purely
inseparable extension of exponent $1$.  We show that the 
extension is excellent for quadratic forms.  Using the 
excellence we recover and extend results by Aravire and Laghribi 
who computed generators for the kernel $W_q(K/F)$ 
of the natural restriction map $W_q(F)\to W_q(K)$
between the Witt groups of quadratic forms of $F$ and $K$, respectively,
where $K/F$ is a finite multiquadratic extension of separability
degree at most $2$.  
\end{abstract}

\maketitle

\section{Introduction}
Throughout this article, we will only consider fields of characteristic $2$.
Let $K$ be a finite 
multiquadratic extension of a field $F$ of separability degree
at most $2$, in other words, 
$K=F(\sqrt{a_1},\cdots ,\sqrt{a_n})$, or 
$K=F(\sqrt{a_1},\cdots ,\sqrt{a_n},\wp^{-1}(b))$, $a_i,b\in F^*$, 
where $\wp^{-1}(b)$ is a root of $X^2+X+b$.
In \cite{AL}, Aravire and Laghribi computed the kernel $W_q(K/F)$ of the 
natural map (induced by scalar extension)
$W_q(F)\to W_q(K)$ between the Witt groups of nonsingular 
quadratic forms over $F$
and $K$, respectively.  They show that 
$$W_q(K/F)=\sum_{i=1}^n\qf{1,a_i}_b\otimes W_q(F)\ ,$$ in the purely
inseparable case, and 
$$W_q(K/F)=W(F)\otimes [1,b]+\sum_{i=1}^n\qf{1,a_i}_b\otimes W_q(F)$$
in the case of separability degree $2$,
where $\qf{1,a_i}_b$ is understood to be a binary bilinear form in 
its diagonal notation, $[a,b]$ represents the nonsingular quadratic form 
$ax^2+xy+by^2$, 
and $W_q(F)$ is considered as a module over the Witt ring of nonsingular
symmetric bilinear forms $W(F)$.  The proof of Aravire and Laghribi
uses differential forms. 
Actually, they prove more, namely they determine 
the kernel of the restriction map in Kato's cohomology
$H^n_2(F)\to H^n_2(K)$ and then deduce the result on the Witt kernels by
using Kato's theorem \cite{K} that $H^*_2(F)$ is naturally isomorphic to the
graded Witt module of quadratic forms over $F$.   
For more details, we refer the reader to Aravire and Laghribi's 
article \cite{AL} and the references there.

The purpose of the present paper is to give a new more
elementary proof of these results
on the Witt kernels.  Our approach is completely different.  On the 
one hand, we show less since we don't get the results on the kernels for
the graded Witt modules.  On the other hand, we show more.
Recall that an extension $K/F$ is called excellent (for quadratic forms) 
if for any
quadratic form $q$ over $F$, the anisotropic part of $q$ over $K$ is defined 
over $F$, i.e. there exists an anisotropic form $\phi$ over $F$ with
$(q_K)_{\ani}\cong\phi_K$.
We will show that the extension $K/F$ is excellent if it is
purely inseparable of exponent $1$, i.e. if $K^2\subseteq F$.
Using this, we can determine the 
Witt kernel for the compositum of such an exponent $1$ extension
with the function field of a quadratic or bilinear Pfister form.
Note that the case $K=F(\sqrt{a_1},\cdots ,\sqrt{a_n},\wp^{-1}(b))$
can be interpreted as the compositum of a finite exponent $1$ extension
with the function field of a $1$-fold quadratic Pfister form, thus
it also becomes a special case of our result.

Our main result is the following:

\begin{mainthm}\label{main-thm}
Let $K/F$ be a field extension such that $K^2\subset F$.  
\begin{enumerate}
\item[(i)] The extension  $K/F$ is excellent for quadratic forms.
\item[(ii)]
$$W_q(K/F)=\sum_{t\in K^{*2}}\qf{1,t}_b\otimes W_q(F)\ .$$
\end{enumerate}
\end{mainthm}

In the next section, we introduce some basic terminology on bilinear 
and quadratic forms in characteristic $2$.  In the third section, we prove
the main theorem, and in the fourth section, we provide some 
remarks concerning our main result and we generalize and 
extend it.

\section{Terminology and definitions}
For all undefined terminology on quadratic and bilinear forms, in particular
in characteristic $2$, we refer to \cite{EKM} and \cite{HL}.
We assume throughout that $F$ is a field of characteristic $2$.
Bilinear forms are always assumed to be symmetric, and underlying vector
spaces of bilinear and quadratic forms are always finite-dimensional.

Let $(B,V)$ be a bilinear form over an $F$-vector space $V$.  The radical
is defined to be $\rad(B)=\{ x\in V\,|\,B(x,V)=0\}$, and $B$ is said
to be nonsingular if $\rad(B)=0$.  We define the value sets
$D_F(B)=\{ B(x,x)\,|\,x\in V\setminus\{ 0\}\}$, 
$D_F^0(B)=D_F(B)\cup\{ 0\}$ and $D_F^*(B)=D_F(B)\cap F^*$.  $B$ is called
isotropic if $D_F(B)=D_F^0(B)$ and anisotropic if $D_F(B)=D_F^*(B)$.
One has the usual notions of isometry $\cong$, orthogonal sum $\perp$
and tensor product $\otimes$
of bilinear forms.  In the sequel, we always assume bilinear forms to be 
nonsingular.

A $2$-dimensional isotropic bilinear form is called a 
metabolic plane, 
in which case one can always find a basis such that the Gram matrix 
with respect to that basis is of the shape
$$\left(\begin{array}{cc} 0 & 1 \\
                          1 & a 
  \end{array}\right)$$
for some $a\in F$.  If $a=0$, this is called a hyperbolic plane.  
A bilinear form $B$ is said to be metabolic 
(resp. hyperbolic) if it is the orthogonal sum of metabolic
(resp. hyperbolic) planes.  It is not difficult to see that a 
form $B$ is hyperbolic iff $D_F(B)=\{ 0\}$.

If the Gram matrix of a form $B$ with respect to a certain basis is
a diagonal matrix with entries $a_i$, $1\leq i\leq n=\dim (B)$, then
we write $B\cong\qf{a_1,\cdots ,a_n}_b$. A diagonalization
exists iff $B$ is not hyperbolic.

A bilinear form $B$ can be decomposed as 
$B\cong B_{\ani}\perp B_m$
with $B_{\ani}$ anisotropic and $B_m$ metabolic.  $B_{\ani}$
is uniquely determined up to isometry, but generally not $B_m$.

We call two bilinear forms $B$ and $B'$ Witt equivalent 
if $B_{\ani}\cong B'_{\ani}$.
The equivalence classes together with addition induced by 
$\perp$ and  multiplication induced by $\otimes$ define the
Witt ring of $F$  denoted by $W(F)$.

An $n$-fold bilinear Pfister form (or bilinear $n$-Pfister for short)
is a form of type 
$\pff{a_1,\ldots ,a_n}_b:=\qf{1,a_1}_b\otimes\ldots\otimes\qf{1,a_n}_b$
for some $a_i\in F^*$.  A bilinear Pfister form $\pi$ is round,
i.e., $\pi\cong x\pi$ for all $x\in D_F(\pi)^*$, 
and  $\pi$ is isotropic iff $\pi$ is
metabolic.

Now let $(q,V)$ be a quadratic form over an $F$-vector space $V$,
with associated bilinear form $B_q(x,y)=q(x+y)+q(x)+q(y)$.  
One defines the radical $\rad(q)=\rad(B_q)$ and calls $q$ nonsingular
if $\rad(q)=0$ and totally singular if $\rad(q)=V$.  Isometry $\cong$ and
orthogonal sum $\perp$ are defined in the usual way, and
we define $D_F(q)=\{ q(x)\,|\,x\in V\setminus\{ 0\}\}$, 
$D_F^0(q)=D_F(q)\cup\{ 0\}$ and $D_F^*(q)=D_F(q)\cap F^*$, 
and we call $q$ 
isotropic resp. anisotropic if $D_F(q)=D_F^0(q)$ resp. $D_F(q)=D_F^*(q)$.
A nonsingular quadratic form can always be written with respect to
a suitable basis as $ax^2+xy+by^2$ with $a,b\in F$, and we denote this
form by $[a,b]$.  Totally singular forms are exactly the diagonal forms
$a_1x_1^2+\ldots +a_nx_n^2$, for which we write $\qf{a_1,\ldots,a_n}$.
A hyperbolic plane $\HH$ is a $2$-dimensional nonsingular isotropic 
quadratic form and one has $\HH\cong [0,0]$, and a quadratic form
is called hyperbolic if it is an orthogonal sum of hyperbolic planes.  
The form $[a,b]$ is hyperbolic iff $ab\in\wp(F)=\{ c^2+c\,|\,c\in F\}$. 

Each quadratic form $q$ has a decomposition $q\cong q_r\perp q_s$ with
$q_r$ nonsingular and $q_s$ totally singular.  In fact, $q_s$ is nothing
but the restriction of $q$ to $\rad(q)$ and it is thus uniquely determined,
whereas $q_r$ is generally not uniquely determined up to isometry, but
$\dim(q_r)$ is uniquely determined.  This decomposition can be 
refined as follows:
$$q\cong \underbrace{\HH\perp\ldots\perp\HH}_{k\ {\rm times}}\perp
q_0\perp \qf{a_1,\ldots,a_m}\perp
\qf{\underbrace{0,\ldots,0}_{\ell\ {\rm times}}}$$
such that $q_0$ is nonsingular and $q_{\ani}=q_0\perp \qf{a_1,\ldots,a_m}$
is anisotropic.  In this decomposition, $k$ is uniquely determined and
called the Witt index  $i_W(q)$ of $q$, $\ell$ is uniquely determined 
and called the defect  $i_d(q)$ of $q$, $q_{\ani}$ is uniquely determined
up to isometry and called the anisotropic part of $q$, and 
$(k\times\HH)\perp q_{\ani}$ is also uniquely determined
up to isometry and called the nondefective part of $q$.

If $(q,V)$ and $(q',V')$ are quadratic forms defined on $F$-vector spaces
$V$ and $V'$, respectively, then we say that $q'$ dominates $q$, $q\prec q'$,
it there exists an injective $F$-linear map $t:V\to V'$ with
$q'(tx)=q(x)$ for all $x\in V$.

We call two quadratic forms $q,q'$ Witt equivalent, $q\sim q'$, 
if $q_{\ani} \cong q'_{\ani}$.  The classes of nonsingular quadratic forms
together with addition induced by the orthogonal sum form the 
Witt group $W_q(F)$ of 
quadratic forms.  $W_q(F)$ has a natural structure
as $W(F)$-module, essentially given by scaling $\qf{a}_b\otimes q = aq$.

An $n$-fold quadratic Pfister form (or quadratic $n$-Pfister for short)
is a form of type 
$\qpf{a_1,\ldots ,a_n}:=\pff{a_1,\ldots,a_{n-1}}_b\otimes [1,a_n]$
for some $a_1,\ldots ,a_{n-1}\in F^*$, $a_n\in F$.  
A quadratic Pfister form $\pi$ is round,
i.e., $\pi\cong x\pi$ for all $x\in D_F(\pi)^*$, 
and  $\pi$ is isotropic iff $\pi$ is hyperbolic.

If $\phi$ is a (quadratic or bilinear) form over $F$ and if 
$K/F$ is a field extension, then
we write $\phi_K:=\phi\otimes K$ for the form obtained by scalar extension.  
This induces
a natural homomorphism $W(F)\to W(K)$ resp. $W_q(F)\to W_q(K)$  
whose kernel will be denoted by $W(K/F)$ resp. $W_q(K/F)$. 
In analogy to the definition of excellence of field extensions 
in the theory of quadratic forms in characteristic $\neq 2$ as defined 
in \cite{ELW}, we say that $K/F$ is excellent for quadratic resp.
bilinear forms if for any quadratic resp. bilinear form $\phi$ over
$F$ there exists a form $\psi$ over $F$ with $(\phi_K)_{\ani}\cong \psi_K$,
in other words, the anisotropic part of $\phi$ over $K$ is defined over $F$.

\section{Witt kernels and excellence for extensions of exponent one}
Let us now turn to the proof of the main theorem.  Throughout this section,
$K/F$ will be a field extension with $K^2\subset F$.  We define
$$J_{K/F}=\sum_{t\in K^{*2}}\qf{1,t}_b\otimes W_q(F)\ .$$

\begin{remark}\label{bilker}
In \cite{H1}, it was shown that a purely inseparable exponent $1$
extension $K/F$ is excellent for bilinear forms and that 
the bilinear Witt kernel $W(K/F)$ is generated
by $\{ \qf{1,t}_b\,|\,t\in K^{*2}\}$.  Hence, one readily has
$J_{K/F}=W(K/F)\otimes W_q(F)$.
\end{remark}

\begin{lem}\label{J-in-kern}
$J_{K/F}\subset W_q(K/F)$.
\end{lem}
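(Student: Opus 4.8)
The plan is to reduce at once to the generators of $J_{K/F}$ and then to quote Remark~\ref{bilker}. By that remark one has $J_{K/F}=W(K/F)\otimes W_q(F)$, so it is enough to show that for any $B\in W(K/F)$ and any $q\in W_q(F)$ the class of $B\otimes q$ lies in the kernel of $W_q(F)\to W_q(K)$. This is immediate: scalar extension $W_q(F)\to W_q(K)$ is a homomorphism of modules lying over the ring homomorphism $W(F)\to W(K)$, so it sends the class of $B\otimes q$ to $B_K\otimes q_K$; since $B\in W(K/F)$ we have $B_K=0$ in $W(K)$, and because $W_q(K)$ is a $W(K)$-module this forces $B_K\otimes q_K=0$ in $W_q(K)$.

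If one wishes to avoid appealing to the bilinear case treated in \cite{H1}, I would instead argue directly on a single generator $\qf{1,t}_b\otimes q$ with $t\in K^{*2}$ and $q$ nonsingular over $F$. Write $t=s^2$ with $s\in K^*$; rescaling a basis vector by $s^{-1}$ gives $\qf{s^2}_b\cong\qf{1}_b$ over $K$, hence $(\qf{1,t}_b)_K\cong\qf{1,1}_b$ and
$$(\qf{1,t}_b\otimes q)_K\ \cong\ \qf{1,1}_b\otimes q_K\ \cong\ q_K\perp q_K\ .$$
The form $q_K\perp q_K$ is nonsingular, and its diagonal subspace $\{(v,v)\}$ is totally isotropic of half the dimension of $q_K\perp q_K$ (in characteristic $2$ the orthogonal sum takes the value $q(v)+q(v)=0$ on it); hence $q_K\perp q_K$ has maximal Witt index and is therefore hyperbolic. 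Thus every generator of $J_{K/F}$, and so every element of $J_{K/F}$, lies in $W_q(K/F)$.

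I do not expect a genuine obstacle in this lemma: the only ingredients are the $W(F)$-module structure of $W_q(F)$ and the already-known bilinear statement recalled in Remark~\ref{bilker} (or, in the self-contained variant, the standard fact that a nonsingular quadratic form of dimension $2m$ possessing an $m$-dimensional totally isotropic subspace must be hyperbolic). The real effort of the paper lies elsewhere, namely in the reverse inclusion $W_q(K/F)\subseteq J_{K/F}$ and in the excellence statement (i) of the Main Theorem.
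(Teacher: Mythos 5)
Your proof is correct and follows essentially the same route as the paper's one-line argument: the point in both is that $(\qf{1,t}_b)_K\cong(\qf{1,1}_b)_K$ is metabolic (i.e.\ trivial in $W(K)$), so by the $W(K)$-module structure of $W_q(K)$ every product $\qf{1,t}_b\otimes q$ dies over $K$. Your self-contained variant merely makes explicit, via the diagonal totally isotropic subspace of $q_K\perp q_K$, the standard fact that a metabolic bilinear form tensored with a nonsingular quadratic form is hyperbolic, which the paper leaves implicit.
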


\begin{proof}  If $t\in K^{*2}$, then clearly 
$(\qf{1,t}_b)_K\cong (\qf{1,1}_b)_K$
is metabolic and hence, for any  $q\in \qf{1,t}_b\otimes W_q(F)$ 
we have $q_K=0\in W_q(K)$.\end{proof}

\begin{lem}\label{totsing-exc} {\rm (Cf.\ \cite[Lemma 2.2]{HL}.)}
Let $q\cong q_r\perp q_s$ be a quadratic form over $F$ 
with $q_r$ nonsingular and 
$q_s\cong\qf{a_1,\ldots, a_n}$ totally
singular, not all $a_i=0$, 
and let $K/F$ be any field extension. 
Then there exist $1\leq r\leq n$ and 
$1\leq i_1<i_2<\ldots <i_t\leq n$ such that for 
$q_s'\cong \qf{a_{i_1},\ldots, a_{i_r}}$, one has 
$((q_s)_K))_{\ani}\cong (q_s')_K$. 
and $(q_K)_{\ani}\cong \phi\perp (q_s')_K$ for some nonsingular
$\phi$ over $K$. In particular,
every field extension is excellent for totally singular quadratic forms.
\end{lem}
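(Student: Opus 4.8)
The plan is to analyze the totally singular form $q_s \cong \langle a_1, \ldots, a_n\rangle$ over $K$ directly, using the fact that totally singular quadratic forms in characteristic $2$ behave like "$F^2$-vector spaces": the value set $D_F^0(q_s) = \sum_i a_i F^2$ is the $F^2$-subspace of $F$ spanned by the $a_i$, and $q_s$ is determined up to isometry by this subspace together with $n$ (the dimension). So $q_s$ is anisotropic iff the $a_i$ are $F^2$-linearly independent in $F$, and isotropy is governed entirely by $F^2$-linear dependence relations. The same holds over $K$ with $F$ replaced by $K$.

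First I would pass from $(a_1, \ldots, a_n)$ to a sublist $(a_{i_1}, \ldots, a_{i_r})$ which is a maximal $K^2$-linearly independent sublist; set $q_s' \cong \langle a_{i_1}, \ldots, a_{i_r}\rangle$. By maximality, every $a_j$ lies in $\sum_k a_{i_k} K^2$, so $D_K^0(q_s) = D_K^0(q_s')$ and $q_s'$ is anisotropic over $K$. To see $((q_s)_K)_{\mathrm{an}} \cong (q_s')_K$ it remains to check the Witt index and defect match up, i.e. that $(q_s)_K \cong (q_s')_K \perp \langle\underbrace{0,\ldots,0}_{n-r}\rangle$ in the sense that the anisotropic part is exactly $(q_s')_K$ with the rest being defect (there is no hyperbolic part since a totally singular form has no nonsingular summand). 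This follows by a change of basis: each linear dependence relation $a_j = \sum_k c_k^2 a_{i_k}$ (with $c_k \in K$) lets one replace the vector carrying $a_j$ by $x_j + \sum_k c_k x_{i_k}$, which the form sends to $0$; doing this for all $j \notin \{i_1,\ldots,i_r\}$ diagonalizes $(q_s)_K$ as $\langle a_{i_1},\ldots,a_{i_r}\rangle \perp \langle 0,\ldots,0\rangle$, with $n-r$ zeros.

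Next, for the full form $q \cong q_r \perp q_s$: apply the above change of basis (which only touches the $q_s$-vectors) to write $q_K \cong q_r|_K \perp (q_s')_K \perp \langle 0,\ldots,0\rangle$. Now $(q_r|_K \perp (q_s')_K)$ is a nonsingular-plus-anisotropic-totally-singular form; let $\phi$ be its nondefective anisotropic part, so $q_r|_K \perp (q_s')_K \cong (k\times \HH) \perp \phi \perp (q_s')_K$ after absorbing all isotropy into the nonsingular block — here one uses that isotropy of a form $q_r' \perp q_s'$ with $q_s'$ anisotropic totally singular, in the nondefective part, comes from the nonsingular part splitting off hyperbolic planes (a singular vector, being isotropic and in the nonsingular-defect-free world, must be detected already). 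Thus $(q_K)_{\mathrm{an}} \cong \phi \perp (q_s')_K$ with $\phi$ nonsingular over $K$, which is the claim; taking $q_r = 0$ gives the "in particular" for totally singular forms.

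The main obstacle I anticipate is the bookkeeping in the last step: carefully justifying that after moving the dependence relations into the totally singular block, the remaining isotropy of $q_r|_K \perp (q_s')_K$ lives entirely in the nonsingular part $q_r|_K$, so that the anisotropic totally singular summand $(q_s')_K$ survives intact into $(q_K)_{\mathrm{an}}$. This is exactly the content of \cite[Lemma 2.2]{HL} cited in the statement, and the cleanest route is to invoke the structure theory of characteristic-$2$ quadratic forms (uniqueness of $i_W$, $i_d$, and $q_{\mathrm{an}}$ from the refined decomposition recalled in Section 2) rather than re-prove it by hand — one shows $q_K$ and $\phi \perp (q_s')_K \perp (\text{hyperbolics}) \perp (\text{zeros})$ have the same nondefective part and the same defect, which pins down $(q_K)_{\mathrm{an}}$ uniquely.
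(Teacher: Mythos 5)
Your argument is correct, and it is worth pointing out that the paper itself contains no proof of this lemma: it is simply quoted from \cite[Lemma 2.2]{HL}, so your self-contained derivation is a genuinely different (independent) route. Part one of your proof is exactly right: choosing a maximal $K^2$-linearly independent subfamily $(a_{i_1},\ldots,a_{i_r})$ and using that a totally singular form is additive (its polar form vanishes, so $q_s(u+v)=q_s(u)+q_s(v)$) to change basis inside the radical gives $(q_s)_K\cong (q_s')_K\perp (n-r)\times\qf{0}$ with $(q_s')_K$ anisotropic, hence $((q_s)_K)_{\ani}\cong (q_s')_K$. The only place where your wording falls short of a proof is the absorption step; the phrase that a singular vector ``must be detected already'' is not an argument. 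The clean way to finish, along the lines you yourself indicate at the end, is this: the restriction of a quadratic form to its radical is an isometry invariant, and radicals commute with scalar extension, so $q_K$ restricted to $\rad(q_K)$ is $(q_s)_K$. Writing the refined decomposition $q_K\cong k\times\HH\perp\phi\perp\sigma\perp\ell\times\qf{0}$ with $\phi$ nonsingular, $\sigma$ totally singular and $\phi\perp\sigma$ anisotropic, restriction to the radical yields $(q_s)_K\cong\sigma\perp\ell\times\qf{0}$; since $\sigma$ is anisotropic this forces $\sigma\cong((q_s)_K)_{\ani}\cong (q_s')_K$, and uniqueness of the anisotropic part gives $(q_K)_{\ani}\cong\phi\perp (q_s')_K$, as required (your change-of-basis reduction to $(q_r)_K\perp(q_s')_K\perp\text{zeros}$ is then not even needed). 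In short: the paper buys brevity by citing \cite{HL}, while your approach buys self-containedness at the price of invoking the uniqueness statements of the refined Witt decomposition recalled in Section 2, which is a perfectly legitimate trade.
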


Let us restate the main theorem in more detail.

\begin{thm}\label{main-thm-a}  
Let $q\cong q_r\perp q_s$ be an anisotropic quadratic form over $F$ with
$q_r$ nonsingular and $q_s$ totally singular.
Then there exists a nonsingular form $q_r'$ over $F$ 
with $\dim q_r'\leq\dim q_r$,
a totally singular form $q'_s\prec q_s$ over $F$, and a form
$\psi\in J_{K/F}$ such that
\begin{enumerate}
\item[(i)] $(q_K)_{\ani}\cong  (q_r'\perp q'_s)_K$, and
\item[(ii)] $q\perp\psi\sim q_r'\perp q_s$.
\end{enumerate}
In particular,  $K/F$ is excellent and $W_q(K/F)=J_{K/F}$.
\end{thm}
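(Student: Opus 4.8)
The proof will be by induction on $\dim q_r$, the dimension of the nonsingular part of the anisotropic form $q \cong q_r \perp q_s$. The base case $\dim q_r = 0$ is exactly Lemma~\ref{totsing-exc}: a totally singular form is excellent over any extension, with $q_s' \prec q_s$ and $\psi = 0$. For the inductive step, I would assume $\dim q_r \geq 2$ and that the statement holds for all anisotropic forms over $F$ with strictly smaller nonsingular part. The crucial point is to show that if $q_K$ is isotropic, then either the isotropy is ``visible'' over $F$ after adding a form from $J_{K/F}$, or we can peel off a hyperbolic plane in a controlled way and reduce $\dim q_r$.

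\smallskip

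\textbf{The key reduction.} Suppose $q_K$ is isotropic. Since $K^2 \subset F$, an element of $K$ is a square over $K$ precisely when it lies in $K^{*2} \subseteq F$ (together with $0$). I would analyze an isotropy relation for $q_K$: a nonzero vector $v$ over $K$ with $q(v) = 0$. Writing $v$ in terms of an $F$-basis with coefficients in $K$, and using that $K$ is a (possibly infinite-dimensional, but one should first reduce to the finite case) $F$-vector space with basis consisting of square roots, the relation $q(v)=0$ decomposes into ``components'' indexed by the basis monomials. The component of the $1$-part forces a relation over $F$ of the shape that $q_r \perp q_s$ represents some element of $F^{*2} \cdot (\text{stuff})$, and one extracts from this a binary bilinear form $\qf{1,t}_b$ with $t \in K^{*2}$ such that $q \perp \qf{1,t}_b \otimes \rho$ becomes isotropic over $F$ in a usable way — or more precisely such that a hyperbolic plane splits off from $(q \perp \psi)_F$ for a suitable $\psi \in J_{K/F}$ built from such binary forms. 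Here I expect to lean on the bilinear analogue from Remark~\ref{bilker} (the structure of $W(K/F)$ and bilinear excellence) to organize the square-root bookkeeping, and on the refined Witt decomposition of quadratic forms in characteristic $2$ recalled in Section~2 to track what happens to $q_r$ and $q_s$ separately.

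\smallskip

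\textbf{Assembling the induction.} Once one hyperbolic plane is extracted, say $q \perp \psi_1 \sim \tilde q$ with $\tilde q$ anisotropic over $F$ and $(\tilde q)_r$ of dimension $\leq \dim q_r$ — and strictly smaller in the generic step, or else $\tilde q_K$ is already anisotropic — apply the induction hypothesis to $\tilde q$ to get $q_r'$, $q_s'$ and $\psi_2 \in J_{K/F}$ with $(\tilde q_K)_{\ani} \cong (q_r' \perp q_s')_K$ and $\tilde q \perp \psi_2 \sim q_r' \perp q_s$. Since $J_{K/F}$ is closed under addition, $\psi = \psi_1 + \psi_2 \in J_{K/F}$, and $(q_K)_{\ani} = (\tilde q_K)_{\ani}$ because $\psi_1$ vanishes over $K$ (Lemma~\ref{J-in-kern}); chaining the Witt equivalences gives (ii), and one checks $q_s' \prec q_s$ is preserved through the reduction (the totally singular part only shrinks, by Lemma~\ref{totsing-exc} applied at each stage). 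The final sentence is then immediate: (i) says the anisotropic part over $K$ is defined over $F$, i.e.\ excellence; and if $q_K = 0$ then $q_r' \perp q_s' $ is hyperbolic over $K$ hence (being anisotropic over $F$, and $K/F$ now known excellent) is $0$, so $q \sim \psi \in J_{K/F}$, giving $W_q(K/F) \subseteq J_{K/F}$, while the reverse inclusion is Lemma~\ref{J-in-kern}.

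\smallskip

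\textbf{The main obstacle.} The hard part will be the descent step: extracting, from an isotropy relation over $K$, an explicit element $\psi \in J_{K/F}$ and a hyperbolic plane splitting off over $F$, while simultaneously guaranteeing that the totally singular part does not grow and that $\dim q_r'$ does not increase. Controlling the interaction between the nonsingular and totally singular parts under scalar extension — a genuinely characteristic-$2$ phenomenon, since a nonsingular form over $F$ can acquire a defect over $K$ — is the delicate point, and it is presumably where Lemma~\ref{totsing-exc} and the round/excellence properties of bilinear Pfister forms over $K$ do the real work.
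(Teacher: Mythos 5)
Your overall skeleton is the right one, and it is in fact the paper's: if $q_K$ is isotropic, find an explicit $\beta\in J_{K/F}$ so that $(q\perp\beta)_{\ani}$ has smaller dimension, then induct, and deduce the Witt-kernel statement from the case $q_K\sim 0$. But the entire mathematical content of the theorem lies in the descent step that you label ``the main obstacle'' and do not carry out. Saying that the isotropy relation $q(v)=0$ ``decomposes into components indexed by basis monomials'' and that ``one extracts from this a binary form $\qf{1,t}_b$'' is not an argument: the nonsingular part contributes cross terms $x_iy_i$ with $x_i,y_i\in K$, and when you expand $x_i,y_i$ over a square-root monomial basis of $K/F$ these cross terms scatter over all monomials (a product of two basis monomials is an $F$-multiple of a third), so each ``component'' is a complicated quadratic relation over $F$ mixing all the coordinates of all the $x_i,y_i$; nothing resembling isotropy of $q\perp\psi$ over $F$, or a splitting hyperbolic plane, falls out of this without substantial further work. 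The paper avoids the monomial expansion altogether: it works directly with the single relation $\sum_i(a_ix_i^2+x_iy_i+b_iy_i^2)+\sum_j c_jz_j^2=0$, uses $x_i^2,y_i^2,z_j^2\in F$ to normalize coefficients modulo explicit forms $\qf{1,x_i^2}_b\otimes[a_i,b_i]\in J_{K/F}$, applies the identity $[t,u]\perp[v,w]\cong[t+v,u]\perp[v,u+w]$ to compress the nonsingular part to $m\le 2$ binary summands, and then uses further explicit elements of $J_{K/F}$ (of the shape $\qf{1,z_i^2}_b\otimes[\,\cdot\,,c_i]$) to absorb the totally singular contributions $c_iz_i^2$ into the second entry of a binary form, at which point the isotropy is visible over $F$. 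None of this is supplied or replaced by anything in your sketch, and your appeal to bilinear excellence and roundness of Pfister forms does not substitute for it.

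A second concrete omission: your induction needs to rule out the possibility that the isotropy over $K$ is carried by a nonsingular binary summand alone, i.e.\ that some $a[1,b]$ anisotropic over $F$ becomes hyperbolic over $K$. This is where pure inseparability enters ($b\in\wp(K)$ would give the separable subextension $F(\wp^{-1}(b))\subseteq K$, a contradiction), and likewise, after the reduction to $m=2$, $\ell=0$, one needs the observation that the relation $a_1+y+b_1y^2+b_2=0$ forces $y\in F$ (since $y^2\in F$ and the other terms lie in $F$), so that $q$ would already be isotropic over $F$. Without these two facts the descent can stall, and your proposal never addresses them. So, as written, the proposal is a plausible plan whose decisive step is missing, and the specific route you gesture at (monomial components) would need to be replaced or heavily developed to close the gap.
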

\begin{proof}  First note that the result on the Witt kernel follows
by considering the case $\dim q_s=0$ and $q_K\sim 0$ in which case
$\dim q_r'=0$ as well and thus $q\cong\psi$.

To prove (i) and (ii), 
write $q\cong q_r\perp q_s$ with $q_r$ nonsingular of dimension
$2m$, and $q_s$ totally singular of dimension $\ell$, and let
$n=\dim q=2m+\ell$.  

If $q_K$ is anisotropic, there is nothing to show.
Also, one readily sees that by Lemma \ref{totsing-exc}, we may assume
that if $\dim q_s>0$, then $(q_s)_K$ is anisotropic.

So let us assume that $q_K$ is isotropic, i.e.
$\dim (q_K)_{\ani}<n$, and $(q_s)_K$ is anisotropic.
In particular, $m\geq 1$.  If $m=1$ and $\ell =0$, then
$q\cong a[1,b]$ for some $b\notin\wp(F)$ and some $a\in F^*$, 
and $q_K$ being isotropic
means that $a[1,b]_K\cong\HH$ and hence $b\in \wp(K)$ and thus
$F(\wp^{-1}(b))\subset K$, a contradiction to $K/F$ being purely
inseparable.  

Hence, we may assume $m\geq 1$ and $n=2m+\ell\geq 3$.

To prove the theorem, it suffices to construct a form
$\beta\in J_{K/F}$ such that for 
$\widetilde{q}=(q\perp\beta)_{\ani}$ one has 
$\dim\widetilde{q}<\dim q$.  Then we know that
$q_K\sim \widetilde{q}_K$ and we conclude by a simple
induction on $m$.

Let us write
$$q\cong \underbrace{[a_1,b_1]\perp\ldots\perp [a_m,b_m]}_{q_r}
\perp \underbrace{\qf{c_1,\ldots,c_\ell}}_{q_s}\ .$$
Since $q_K$ is isotropic, there exist $x_i,y_i,z_j\in K$,
$1\leq i\leq m$, $1\leq j\leq\ell$, with
\begin{equation}\label{sum1}
0=\sum_{i=1}^m (a_ix_i^2+x_iy_i+b_iy_i^2)+\sum_{j=1}^\ell c_jz_j^2\ 
\end{equation}
and not all $x_i,y_i,z_j$ equal to $0$.

Suppose there exists $i\in\{1,\ldots,m\}$
with $x_i=y_i=0$, or $j\in \{1,\ldots,\ell\}$ with $z_j=0$.
Then write $q\cong \widehat{q}\perp [a_i,b_i]$ resp.
$q\cong \widehat{q}\perp\qf{c_i}$.
Thus, $\dim \widehat{q}<\dim q$ and $\widehat{q}_K$ is isotropic.
We then may proceed by working with $\widehat{q}$ rather than
$q$.

So we may assume that for each $1\leq i\leq m$ we have 
$x_i\neq 0$ or $y_i\neq 0$,
and that for each $1\leq j\leq \ell$ we have  $z_j\neq 0$.
Without loss of generality, there exists $1\leq k\leq m$ with
$x_i\neq 0$ for $1\leq i\leq k$, and $x_{k+1}=\ldots =x_m=0$.
In particular, $y_i\neq 0$ for $k+1\leq i\leq m$.

Note that $K^2\subseteq F$, so $x_i^2,y_i^2,z_j^2\in F$, so,
if $x_i\neq 0$,
$\pi_i\cong\qf{1,x_i^2}_b\otimes [a_i,b_i]\in J_{K/F}$ and
$[a_i,b_i]\perp\pi_i\sim [a_ix_i^2,\frac{b_i}{x_i^2}]$.  Put
$$q_1:= \bigl[a_1x_1^2,\frac{b_1}{x_1^2}\bigr]\perp\ldots\perp
\bigl[a_kx_k^2,\frac{b_k}{x_k^2}\bigr]\perp 
[a_{k+1},b_{k+1}]\perp\ldots\perp [a_m,b_m]
\perp q_s\ .$$
Then $q_1\sim q  \perp\pi_1\perp\ldots\perp\pi_{k}$,
$\dim q=\dim q_1$ and $q_K\sim (q_1)_K$. 
Since we are interested in the anisotropic part of $q$ over $K$ and
since $q_1$ differs from $q$ by a form in $J_{K/F}\subseteq
W_q(K/F)$, and since
$$
a_ix_i^2+x_iy_i+b_iy_i^2=(a_ix_i^2)\cdot 1^2
+1\cdot (x_iy_i)+\frac{b_i}{x_i^2}(x_iy_i)^2\ ,$$
we may assume furthermore, by replacing $q$ by $q_1$, that
in Eq. \ref{sum1}  and by reassigning the letters $a_i$, $b_i$, 
we have $x_1=\ldots =x_k=1$, so together with the
other assumptions
$$0=\sum_{i=1}^k(a_i+y_i+b_iy_i^2)+\sum_{i=k+1}^m b_iy_i^2+
\sum_{j=1}^\ell c_jz_j^2\ .$$
Now using repeatedly $[t,u]\perp [v,w]\cong [t+v,u]\perp [v,u+w]$, we get
\begin{equation}\label{sum2}
[a_1,b_1]\perp\ldots \perp[a_k,b_k]\cong
[a_1+\ldots +a_k,b_1]\perp [a_2,b_1+b_2]\perp\ldots\perp [a_k,b_1+b_k]\ .
\end{equation}
Note that furthermore
\begin{equation}\label{sum3}
\begin{array}{rcl}
0 & = & (a_1+\ldots +a_k)+(y_1+\ldots +y_k)+ b_1(y_1+\ldots +y_k)^2\\[1ex]
 &  & + (b_1+b_2)y_2^2+\ldots + (b_1+b_k)y_k^2+\sum_{i=k+1}^m b_iy_i^2+
\sum_{j=1}^\ell c_jz_j^2\ ,
\end{array}
\end{equation}
so in view of Eqs. \ref{sum2} and \ref{sum3}, we may assume $k=1$, i.e. 
in Eq. \ref{sum1}, we have $x_1=1$, $x_2=\ldots =x_m=0$.
We also still may assume that $y_i\neq 0$ for $2\leq i\leq m$ or 
else we could again omit those terms $[a_i,b_i]$ with $i\geq 2$ and $y_i=0$.  
By adding forms in $J_{K/F}$ similarly as above, we may furthermore
assume $y_2=\ldots=y_m=1$, so that Eq. \ref{sum1} becomes
\begin{equation}\label{sum4}
0=a_1+y_1+b_1y_1^2+\sum_{i=1}^k b_i+\sum_{j=1}^\ell c_jz_j^2\ .\end{equation}
Note that, again similarly as before,
\begin{equation}\label{sum5}
[a_2,b_2]\perp\ldots \perp[a_m,b_m]\cong
[a_2,b_2+\ldots+b_m]\perp [a_2+a_3,b_3]\perp\ldots\perp [a_2+a_m,b_m]
\end{equation}
Eqs. \ref{sum4} and \ref{sum5} together imply that in Eq. \ref{sum1}, 
we may assume
$m\leq 2$, $x_1=1$ and $y_2=1$ (if $m=2$), still with $2m+\ell\geq 3$.

Suppose $\ell =0$.  Then $m=2$ and
we are in the case $q\cong [a_1,b_1]\perp [a_2,b_2]$ and there
exists $y\in K$ with $a_1+y+b_1y^2+b_2=0$.  But $y^2\in F$, therefore
also $y\in F$ and we have that $q$ is also isotropic over $F$, a 
contradiction.

Suppose now $\ell >0$ and $m=2$.  
So $q\cong [a_1,b_1]\perp [a_2,b_2]\perp\qf{c_1,\ldots c_\ell}$ and 
Eq. \ref{sum1} becomes with the above assumptions
$0=a_1+y+b_1y^2+b_2+c_1z_1^2+\ldots +c_\ell z_\ell^2$ with $y,z_i\in K$.
Recall that by assumption all $z_i\in K^*$, hence $z_i^2\in F^*$.
Then put
$$\tau_i:=\qf{1,z_i^2}_b\otimes [a_2z_i^2,c_i]\cong 
[a_2z_i^2,c_i]\perp [c_iz_i^2,a_2]\in J_{K/F}$$
and we have 
$$[a_2,d]\perp\qf{c_i}\perp\tau_i\sim [a_2,d+c_iz_i^2]\perp\qf{c_i}$$
and hence, with $\tau\cong\tau_1\perp\ldots\perp\tau_\ell\in J_{K/F}$,
we get
$$[a_2,b_2]\perp q_s\perp\tau
\sim [a_2,b_2+c_1z_1^2+\ldots +c_\ell z_\ell^2]\perp q_s\ .$$
Put $b_2'=b_2+c_1z_1^2+\ldots +c_\ell z_\ell^2$.
This shows that for $q_2:=[a_1,b_1]\perp [a_2,b_2']\perp q_s$ we have
$q\perp\tau\sim q_2$, hence 
$q_K\sim (q_2)_K$ and (with the same $y$ as before)
$a_1+y+b_1y^2+b_2'=0$, so $q_2$ is isotropic, and
$\widetilde{q}:=(q_2)_{\ani}$ is the desired form with
$\dim\widetilde{q}<\dim q_2=\dim q$.

Suppose finally that $\ell >0$ and $m=1$, so 
$q\cong [a_1,b_1]\perp\qf{c_1,\ldots c_\ell}$ and 
Eq. \ref{sum1} becomes with the above assumptions
$0=a_1+y+b_1y^2+c_1z_1^2+\ldots +c_\ell z_\ell^2$ with $y,z_i\in K$.

If $y=0$, put 
$$\rho_i:=\qf{1,z_i^2}_b\otimes [b_1z_i^2,c_i]\cong
[b_1z_i^2,c_i]\perp [b_1,c_iz_i^2]\in J_{K/F}\ ,$$
and we have
$$[d,b_1]\perp\qf{c_i}\perp\rho_i\sim [d+c_iz_i^2,b_1]\perp\qf{c_i}\ ,$$
and with $\rho:=\rho_1\perp\ldots\perp\rho_\ell\in J_{K/F}$, 
we get similarly as before
$$q\perp\rho\sim [\underbrace{a_1+c_1z_1^2+\ldots +c_\ell z_\ell^2}_0,b_1]
\perp q_s\sim q_s\ .$$
and $\widetilde{q}:=q_s$ is the desired form.

If $y\neq 0$, put 
$$\nu_i:=\bigl\langle 1,\frac{z_i^2}{y^2}\bigr\rangle_b\otimes 
\bigl[a_1\frac{z_i^2}{y^2},c_i\bigr]
\cong \bigl[a\frac{z_i^2}{y^2},c_i\bigr]\perp 
\bigl[a_1,c_i\frac{z_i^2}{y^2}\bigr]\in J_{K/F}\ ,$$
and we have
$$[a_1,d]\perp\qf{c_i}\perp\nu_i\sim 
\bigl[a_1,d+c_i\frac{z_i^2}{y^2}\bigr]\perp\qf{c_i}\ ,$$
and with $\nu:=\nu_1\perp\ldots\perp\nu_\ell\in J_{K/F}$
and  $b_1':=b_1+c_1\frac{z_1^2}{y^2}+
\ldots +c_\ell \frac{z_\ell^2}{y^2}$ we get
$$q\perp\nu\sim [a_1,b_1']\perp q_s$$
and (with the same $y$ as before) $a_1+y+b_1'y^2=0$, i.e. $[a_1,b_1']$ is
isotropic and hence hyperbolic, and
$q\perp\nu\sim q_s$, and again $\widetilde{q}:=q_s$ is the desired form. 
This completes the proof.
\end{proof}

\section{Some corollaries and remarks}
We are now interested in extensions that are obtained
by composing purely inseparable exponent $1$ extensions 
with function fields of Pfister
forms, and in the Witt kernels of these new extensions.

If $\pi$ is an $n$-fold quadratic Pfister form over 
$F$ ($n\geq 1$),
then let $F(\pi)$ be the function field of the projective quadric
$X_\pi$ given by the equation $\pi=0$.  

If $B=\pff{a_1,\ldots ,a_n}_b$, $a_i\in F^*$, is an
$n$-fold bilinear Pfister form  with associated 
totally singular quadratic form $q(x)=B(x,x)$, i.e.
$q=\pff{a_1,\ldots ,a_n}$, then there exist $k\leq n$ and
$1\leq i_1<\ldots <i_k\leq n$ with 
$q_{\ani}\cong\pff{a_{i_1},\ldots,a_{i_k}}$ (see, e.g., \cite[\S 8]{HL}). 
Put $B_0=\pff{a_{i_1},\ldots,a_{i_k}}_b$. We define the function field
$F(B):=F(q)$.  Then $F(q)$ is a purely transcendental extension
of $F(q_{\ani})=F(B_0)$ (see, e.g., \cite[Remark 7.4(iii)]{H}).  
Since anisotropic forms stay anisotropic over 
purely transcendental extensions, we have that $W_q(F(B)/F)=W_q(F(B_0)/F)$.
Note that if $k=0$, i.e. $q_{\ani}\cong\qf{1}$ and thus $B_0\cong\qf{1}_b$,
we have $F(B_0)=F$ and hence
$W_q(F(B)/F)=W_q(F(B_0)/F)=0$.

We have the following results on Witt kernels for function fields of
Pfister forms.

\begin{prop}\label{pfker}
Let $n\geq 1$ and let $\pi$ is an $n$-fold quadratic Pfister form over 
$F$, or let $B=\pff{a_1,\ldots ,a_n}_b$, $a_i\in F^*$, be an
$n$-fold bilinear Pfister form with  associated 
totally singular quadratic form $q(x)=B(x,x)$ such that
$\dim q_{\ani}\geq 2$, i.e. $[F^2(a_1,\ldots ,a_n):F^2]>1$.
\begin{enumerate}
\item[(i)] {\rm (\cite[Th.\ 1.4(2)]{L} or
\cite[Cor.\ 23.6]{EKM}.)} Let $\phi\in W_q(F(\pi)/F)$
be anisotropic, then there exists a bilinear form $\beta$ with
$\phi\cong\beta\otimes\pi$.  In particular,
$W_q(F(\pi)/F)=W(F)\otimes\pi$.
\item[(ii)] {\rm (\cite[Th.\ 1.4(3)]{L}.)} 
Let $\phi$ be an anisotropic form in $W_q(F(B)/F)$, then
there exists a nonsingular quadratic form $\tau$ with
$\phi\cong B_0\otimes\tau$.  In particular, 
$W_q(F(B)/F)=B_0\otimes W_q(F)$.
\end{enumerate}
\end{prop}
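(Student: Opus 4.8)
The plan is to treat both parts in parallel, since each asserts an equality of the shape ``Witt kernel $=$ module times a Pfister form'', and in each case the nontrivial content is the description of the \emph{anisotropic} members of the kernel; the asserted equalities then follow formally. For the reverse inclusions: for any bilinear $\beta$ the form $(\beta\otimes\pi)_{F(\pi)}$ is hyperbolic because $\pi_{F(\pi)}$ is isotropic, hence hyperbolic (an isotropic quadratic Pfister form is hyperbolic), and a bilinear form tensored with a hyperbolic quadratic form is again hyperbolic; likewise, for any nonsingular quadratic $\tau$, the form $(B_0\otimes\tau)_{F(B)}$ is Witt-trivial because $q_{\ani}$ is isotropic over $F(q_{\ani})\subseteq F(B)$, so $(B_0)_{F(B)}$ is isotropic, hence metabolic (an isotropic bilinear Pfister form is metabolic), and a metabolic bilinear form, being $0$ in $W(F(B))$, annihilates $\tau_{F(B)}$ in the $W$-module $W_q$. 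For part (ii) I would moreover first reduce to $F(B_0)$: as recorded above, $F(B)/F(B_0)$ is purely transcendental and anisotropic forms remain anisotropic over it, so $W_q(F(B)/F)=W_q(F(B_0)/F)$, and it suffices to work over $F(B_0)=F(q_{\ani})$.

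For part (i) I would induct on $\dim\phi$. Given an anisotropic nonsingular $\phi$ with $\phi_{F(\pi)}\sim 0$, the key step is to exhibit $c\in F^*$ with $c\pi\prec\phi$. Granting this, one writes $\phi\cong c\pi\perp\phi'$; then $\phi'$, as a subform of an anisotropic form, is again anisotropic and nonsingular of strictly smaller dimension, and $\phi'_{F(\pi)}\sim 0$ (as both $\phi$ and $c\pi$ die over $F(\pi)$), so induction gives $\phi'\cong\beta'\otimes\pi$ for a bilinear $\beta'$, whence $\phi\cong(\qf{c}_b\perp\beta')\otimes\pi$; the base case $\dim\phi=0$ is the empty form. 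The existence of the subform $c\pi$ is the function-field subform theorem (the characteristic-$2$ analogue of Cassels--Pfister) applied to the quadric $X_\pi$: since $X_\pi$ acquires a rational point over $F(\pi)$ and $\phi_{F(\pi)}$ is hyperbolic, one specializes a totally isotropic splitting of $\phi_{F(\pi)}$ back along the generic point, clearing denominators, to produce a scalar copy of $\pi$ inside $\phi$. This is exactly the quoted statement of Laghribi, resp.\ \cite[Cor.\ 23.6]{EKM}.

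Part (ii) is formally parallel but genuinely harder, and this is the step I expect to be the real obstacle. Over $F(B_0)=F(q_{\ani})$ the totally singular quasi-Pfister form $q_{\ani}(x)=B_0(x,x)$ becomes isotropic. Given an anisotropic nonsingular $\phi$ with $\phi_{F(q_{\ani})}\sim 0$, one again wants to split off a building block --- here a form $B_0\otimes[c,d]$ --- so that $\phi\cong(B_0\otimes[c,d])\perp\phi'$ with $\phi'$ still anisotropic, still killed by $F(q_{\ani})$, and of smaller dimension, then induct to obtain $\phi\cong B_0\otimes\tau$ with $\tau$ nonsingular quadratic. The difficulty is that the function field of a totally singular (quasi-linear) form behaves very differently from that of a smooth quadric: the relevant subform theorem, describing precisely which nonsingular quadratic forms become Witt-trivial over $F(q_{\ani})$, is delicate and is exactly the content of \cite[Th.\ 1.4(3)]{L}, resting on a norm-form and specialization argument adapted to the purely inseparable geometry of the hypersurface $q_{\ani}=0$. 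Reproving it here from scratch would amount to duplicating Laghribi's work, so I would simply invoke it; the purpose of recording Proposition~\ref{pfker} is only to have both Witt kernels available to combine with the Main Theorem in the next step.
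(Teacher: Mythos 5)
Your proposal is correct and matches the paper's treatment: the paper offers no proof of Proposition~\ref{pfker} but simply attributes it to \cite[Th.~1.4]{L} and \cite[Cor.~23.6]{EKM}, and you likewise rest the hard forward directions on those references (your sketch of the easy reverse inclusions and of the standard subform-theorem induction for part~(i) is accurate, and deferring part~(ii) to Laghribi is exactly what the paper does).
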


\begin{remark}\label{quadker}  Note that for $b\in F\setminus \wp(F)$, 
$\pi =[1,b]$ is an anisotropic $1$-fold 
quadratic Pfister form and  $F(\pi)=F(\wp^{-1}(b))$,
and for $a\in F\setminus F^2$, $B=\qf{1,a}_b$ is an anisotropic
$1$-fold bilinear Pfister form and $F(B)=F(\sqrt{a})$, 
so the kernels in Proposition \ref{pfker} are nothing but the
well known ones for separable resp. inseparable quadratic extensions.
\end{remark}

\begin{coro}\label{quadpfister} Let $n\geq 1$ and let 
$\pi$ be an $n$-fold quadratic Pfister 
form over $F$.  Let $K$ be a purely inseparable extension of $F$
of exponent $1$.  Let $L=K(\pi)$.  Then
$W_q(L/F)=W_q(K/F)+WF\otimes \pi$.
\end{coro}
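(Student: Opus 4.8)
The plan is to describe $L=K(\pi)$ as a tower in which each extension contributes a known Witt kernel, and to control how the kernels interact. Since $\pi$ is defined over $F$, there is a commutative square of restriction maps $W_q(F)\to W_q(K)$, $W_q(F)\to W_q(F(\pi))$, and both into $W_q(L)=W_q(K(\pi))=W_q(F(\pi)\cdot K)$. The inclusion $W_q(K/F)+W(F)\otimes\pi\subseteq W_q(L/F)$ is immediate: any form in $W_q(K/F)$ dies already over $K$, hence over $L$; and $\pi_L$ is isotropic (as $\pi$ vanishes on $X_\pi$, which has a rational point over $L=F(X_\pi)\cdot K$), so $\pi_L$ is hyperbolic and $W(F)\otimes\pi\subseteq W_q(L/F)$. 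The substance is the reverse inclusion.

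For the reverse inclusion, I would take an anisotropic $\phi$ over $F$ with $\phi_L$ hyperbolic and peel off the two extensions one at a time. First apply the Main Theorem / Theorem \ref{main-thm-a} to the \emph{purely inseparable} extension $K/F$: it is excellent, so $(\phi_K)_{\ani}\cong\psi_K$ for some anisotropic $\psi$ over $F$, and moreover $\phi\perp\eta\sim\psi$ for some $\eta\in J_{K/F}=W_q(K/F)$. Now $\psi_K$ becomes hyperbolic over $L=K(\pi)$, i.e. $(\psi_K)_{L}=(\psi_K)_{K(\pi_K)}$ is hyperbolic, so $\psi_K\in W_q(K(\pi_K)/K)$. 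Here I need $\pi_K$ to still be a genuine (anisotropic) quadratic Pfister form over $K$ so that Proposition \ref{pfker}(i) applies over $K$; if $\pi_K$ is isotropic then $\pi_K$ is already hyperbolic over $K$, $L/K$ is purely transcendental, $W_q(L/F)=W_q(K/F)$, and $\pi\in W_q(K/F)$ already, so the formula holds trivially. Assuming $\pi_K$ anisotropic, Proposition \ref{pfker}(i) over $K$ gives $\psi_K\cong\beta\otimes\pi_K$ for some bilinear form $\beta$ over $K$, i.e. $\psi_K\in W(K)\otimes\pi_K$.

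The remaining task is to descend $\psi_K\in W(K)\otimes\pi_K$ back to a statement over $F$, namely to show $\psi\in W_q(K/F)+W(F)\otimes\pi$ in $W_q(F)$, which then gives $\phi=\psi-\eta\in W_q(K/F)+W(F)\otimes\pi$ since $\eta\in W_q(K/F)$. The clean way is to use the bilinear excellence of $K/F$ from Remark \ref{bilker}: by \cite{H1}, $W(K/F)$ is generated by the $\qf{1,t}_b$, $t\in K^{*2}$, and $K/F$ is excellent for bilinear forms, so every bilinear form over $K$ is, modulo $W(K/F)$, extended from $F$; equivalently $W(K)=\operatorname{im}(W(F)\to W(K))+W(K/F)$. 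Writing $\beta\sim\beta_0{}_K+\sum_i\qf{1,t_i}_b$ with $\beta_0$ over $F$, we get $\psi_K\sim(\beta_0\otimes\pi)_K$ in $W_q(K)$ because each $\qf{1,t_i}_b\otimes\pi_K$ is metabilic-times-form hence zero in $W_q(K)$ (as $\qf{1,t_i}_b$ is already metabolic over $K$). Hence $\psi\perp(\beta_0\otimes\pi)\in W_q(K/F)=J_{K/F}$, so $\psi\in W(F)\otimes\pi+W_q(K/F)$, as wanted.

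I expect the main obstacle to be exactly this last descent step: one must be careful that $\beta\otimes\pi_K$ being hyperbolic and $\beta$ being ``extended modulo $W(K/F)$'' combine correctly in the \emph{quadratic} Witt group $W_q$, using that $W_q(F)$ is a $W(F)$-module and that $W(K/F)\otimes W_q(K)$ maps to $0$; the dimension count in Theorem \ref{main-thm-a} (that $\dim\psi\le\dim\phi$) is what makes the induction underlying ``$\phi\perp\eta\sim\psi$'' legitimate, so invoking it rather than re-deriving it is essential. A secondary point needing care is the case distinction on whether $\pi_K$ stays anisotropic over $K$; handling the isotropic case up front, as above, keeps the main argument clean.
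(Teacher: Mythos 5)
Your overall route coincides with the paper's: prove the easy inclusion, use the excellence of $K/F$ (Theorem \ref{main-thm-a}) to replace $\phi$ by an $F$-form $\psi$ with $(\phi_K)_{\ani}\cong\psi_K$, apply Proposition \ref{pfker}(i) over $K$ to get $\psi_K\cong\beta\otimes\pi_K$ with $\beta$ bilinear over $K$, and then descend the bilinear factor to $F$. The gap is in that last descent. The statement you rely on --- that ``every bilinear form over $K$ is, modulo $W(K/F)$, extended from $F$'' --- is, since $W(K/F)$ consists of forms over $F$ whose image in $W(K)$ is zero, exactly the assertion that the restriction map $W(F)\to W(K)$ is surjective, and this is false for purely inseparable exponent $1$ extensions. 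For instance, take $K=F(\sqrt{a})$ with $a\in F^*\setminus F^{*2}$ and consider $\qf{\sqrt{a}}_b$ over $K$: if it were Witt equivalent to $\gamma_K$ for some bilinear $\gamma$ over $F$, then its anisotropic part (itself) would, by the bilinear excellence of \cite{H1} quoted in Remark \ref{bilker}, be isometric to $\delta_K$ for some one-dimensional $\delta\cong\qf{f}_b$ over $F$, forcing $\sqrt{a}\in fK^{*2}\subseteq F$, a contradiction. Excellence only says that anisotropic parts of \emph{extended} forms descend; it does not make arbitrary forms over $K$ extended up to Witt equivalence, so your rewriting $\beta\sim(\beta_0)_K+\sum_i\qf{1,t_i}_b$ is not available.

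What you actually need is weaker but still requires a genuine argument: for the specific $\beta$ over $K$ with $\beta\otimes\pi_K\cong\psi_K$, where $\psi$ and $\pi$ are defined over $F$, one can find a bilinear form $\beta_0$ over $F$ with $\psi_K\cong(\beta_0\otimes\pi)_K$. This is precisely what the paper invokes at this point, namely \cite[Prop.\ 2.11]{ELW} (whose proof, an induction using the roundness of the Pfister form $\pi$, representation of elements of $D_F^*(\psi)$, and the excellence of $K/F$, goes through in characteristic $2$); it is also the mirror image of Lemma \ref{mamo}, with the bilinear and quadratic roles exchanged. Replacing your surjectivity claim by this descent lemma repairs the argument; the remaining ingredients of your proposal --- the easy inclusion, passing to $\psi$ via Theorem \ref{main-thm-a}, applying Proposition \ref{pfker}(i) over $K$, and the explicit treatment of the case where $\pi_K$ becomes isotropic (which the paper leaves implicit) --- are correct and match the paper's proof.
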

\begin{proof}  
Clearly, $W_q(K/F)+WF\otimes \pi\subseteq W_q(L/F)$.
Conversely, let $\phi$ be a form over $F$ with $\phi\in W_q(L/F)$.
By the excellence property of $K/F$, there exists a form
$\psi$ over $F$ with $(\phi_K)_{\ani}\cong \psi_K$.  If $\dim\psi =0$ then
$\phi\in W_q(K/F)$ and
we are done.  So suppose $\dim\psi >0$.  Since $0=\phi_L=\psi_L\in W_q(L)$,
it follows by Proposition \ref{pfker}
that there exists a bilinear form $\beta$ over $L$ with
$\psi_L\cong\beta\otimes\pi_L$.  By the excellence of $K/F$, we may
in fact assume that $\beta$ is already defined over $F$, see 
\cite[Prop.\ 2.11]{ELW} (the argument there works also in 
characteristic $2$), so we may put $\psi\cong\beta\otimes\pi$.  But
then $\phi -\beta\otimes\pi\in W_q(K/F)$ and hence
$\phi\in W_q(K/F)+WF\otimes \pi$ as desired.
\end{proof}

\begin{lem}\label{mamo} Let $K/F$ be a purely 
inseparable extension of exponent $1$,
let $B$ be a bilinear Pfister form over $F$ and let $\rho$ be a nonsingular
quadratic form over $K$ such that $B_K\otimes \rho$ is defined over $F$,
i.e. there exists a quadratic form $\phi$ over $F$ with
$\phi_K\cong B_K\otimes \rho$.  Then there exists a nonsingular quadratic
form $\rho_0$ over $F$ with 
$\phi_K\cong B_K\otimes \rho\cong (B\otimes\rho_0)_K$.
\end{lem}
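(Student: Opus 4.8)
The plan is to reduce to the case where the bilinear Pfister form $B$ is anisotropic over $K$, and then to descend $\rho$ by induction on $\dim\rho$ in the spirit of the proof of Theorem~\ref{main-thm-a}.

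First, two degenerate cases are immediate. If $\dim B=1$ take $\rho_0=\phi$, which is nonsingular since $\phi_K\cong\rho$ is. If $B_K$ is isotropic, then it is metabolic (being a bilinear Pfister form), so $B_K\otimes\rho$, and hence $\phi_K$, is hyperbolic of dimension $2^n\dim\rho$ where $2^n=\dim B$; the same conclusion holds if $\rho$ is itself hyperbolic. In either case, any hyperbolic nonsingular quadratic form $\rho_0$ over $F$ with $\dim\rho_0=\dim\rho$ works, because then $(B\otimes\rho_0)_K$ and $\phi_K$ are both hyperbolic of dimension $2^n\dim\rho$. So from now on assume $n\geq1$, $B$ anisotropic over $K$ (hence over $F$), and $\rho$ not hyperbolic. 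Since $B_K$ is an anisotropic bilinear Pfister form, $W_q(K(B)/K)=B_K\otimes W_q(K)$ by Proposition~\ref{pfker}(ii), and as $B_{K(B)}$ is metabolic the form $B_K\otimes\rho$ lies in $W_q(K(B)/K)$; hence Proposition~\ref{pfker}(ii) gives $(B_K\otimes\rho)_{\ani}\cong B_K\otimes\rho'$ for a nonsingular (automatically anisotropic) quadratic form $\rho'$ over $K$. The Witt index of $B_K\otimes\rho$ is then a multiple of $2^n$ (this again follows from Proposition~\ref{pfker}(ii)), so $\phi_K\cong B_K\otimes\bigl(\rho'\perp(i\times\HH)\bigr)$ for a suitable $i$, with $\dim\bigl(\rho'\perp(i\times\HH)\bigr)=\dim\rho$; since $(\phi_K)_{\ani}=B_K\otimes\rho'$ is defined over $F$ by the excellence of $K/F$ (Theorem~\ref{main-thm-a}), and since a solution for $\rho'$ yields one for $\rho$ by padding with hyperbolic planes (a dimension count), we may assume moreover that both $\rho$ and $B_K\otimes\rho$ are anisotropic.

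Now for the descent, I would induct on $\dim\rho$, peeling off one binary block. Writing $\rho\cong[e,f]\perp\rho_1$ over $K$, the block $B_K\otimes[e,f]$ equals the scaled $(n+1)$-fold quadratic Pfister form $e\cdot\bigl(B_K\otimes[1,ef]\bigr)$. Because $K^2\subset F$, we have $e^2,(ef)^2\in F$, so $\qf{1,e^2}_b$ and $\qf{1,(ef)^2}_b$ become metabolic over $K$; tensoring these with suitable quadratic forms produces elements of $W_q(K/F)=J_{K/F}$. Adding such elements to $\phi$, and exploiting the roundness of the quadratic Pfister forms $B_K\otimes[1,\cdot]$ together with the identities for orthogonal sums of binary quadratic forms already used in the proof of Theorem~\ref{main-thm-a}, one should be able to replace the block $[e,f]$ by a binary block defined over $F$ without changing $\dim\rho$, and so, inducting, arrive at a form $\rho_0$ over $F$ with $(B\otimes\rho_0)_K\cong B_K\otimes\rho$ up to a hyperbolic summand; the Witt equivalences obtained along the way are upgraded to the required isometry by the same hyperbolic-plane bookkeeping as in Theorem~\ref{main-thm-a}, using that every element of $J_{K/F}$ becomes hyperbolic over $K$.

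I expect this last step to be the crux. The difficulty is that the scalar $e$ of a binary block $[e,f]$ of $\rho$ lies in $K$ but in general not in $F$, so it cannot be used to rescale a form defined over $F$; one is forced to work with $e^2\in F$ and with roundness of the Pfister forms in play, and one must carefully verify that the correction terms added to $\phi$ really lie in $J_{K/F}$ and that the form over $F$ produced stays of the shape $B\otimes(\text{nonsingular quadratic})$ up to the hyperbolic padding introduced in the reduction.
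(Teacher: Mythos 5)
Your opening reductions are sound (the case $\dim B=1$, the metabolic/hyperbolic case, and the passage to the situation where $B_K\otimes\rho$ is anisotropic via Proposition \ref{pfker}(ii) and excellence), but they are not where the difficulty of the lemma lies, and the step you yourself call ``the crux'' is exactly the part you do not prove. The paper's proof of Lemma \ref{mamo} is a reference: the statement is Theorem 1 of Mammone--Moresi \cite{MM}, proved there for an inseparable quadratic extension, and the paper observes that their proof goes through verbatim for any exponent-$1$ extension because for every $b\in K$ the form $[1,b]$ is defined over $F$ (indeed $[1,b]\cong[1,b^2]$ over $K$ and $b^2\in F$). Your sketch reconstructs only this easy half: the second slot of a block $[e,f]$ of $\rho$ descends via $[1,ef]\cong[1,e^2f^2]$, but the scalar $e\in K^*$, which in general does not lie in $F$, is the real obstruction, and ``exploiting roundness'' does not dispose of it. Roundness of the quadratic Pfister form $B_K\otimes[1,ef]$ only allows rescaling by elements of $D_K(B_K\otimes[1,ef])$; to trade $e$ for a scalar in $F^*$ one has to use that the \emph{whole} form is defined over $F$ (for instance, that $\phi_K$ represents elements of $F^*$) and then recombine the blocks accordingly --- that is precisely the content of the Mammone--Moresi argument, which you neither carry out nor cite.

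A second, related gap: your device of ``adding elements of $J_{K/F}$ to $\phi$'' controls $\phi_K$ only up to Witt equivalence and gives no mechanism for ensuring that the modified $F$-form is again of the shape $B\otimes(\text{nonsingular quadratic form})$ over $K$ --- but producing an $F$-form of exactly that shape is the assertion to be proved, so the induction as described is circular at its key step ("one should be able to replace the block $[e,f]$ by a binary block defined over $F$" is asserted, not shown). The dimension-count upgrade from Witt equivalence to isometry at the end is fine once everything is anisotropic, but without a proof of the block-descent step the argument is a plan rather than a proof. To repair it, either reproduce the proof of \cite[Th.\ 1]{MM} and check, as the paper does, that the only place where the quadratic-extension hypothesis enters is the definability of $[1,b]$ over $F$, or cite that theorem together with this observation.
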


\begin{proof}  This is essentially \cite[Th.\ 1]{MM}, except that
there $K$ was assumed to be an inseparable quadratic extension.
But the proof works in exactly the same way by using the fact that
for any $b\in K$, one has that $[1,b]$ is defined over $F$ since
$[1,b]\cong [1,b^2]$ over $K$ with $b^2\in F$ since $K^2\subset F$.
\end{proof}

\begin{coro}\label{bilpfister} Let $K$ be a purely inseparable extension of $F$
of exponent $1$. Let $n\geq 1$ and let $B=\pff{a_1,\ldots ,a_n}_b$, 
$a_i\in F^*$, be an $n$-fold bilinear Pfister 
form over $F$ with associated totally singular quadratic form
$q=\pff{a_1,\ldots ,a_n}$, and let $k\leq n$ and
$1\leq i_1<\ldots <i_k\leq n$ be such that 
$(q_K)_{\ani}\cong\pff{a_{i_1},\ldots,a_{i_k}}_K$. 
Put $B_0=\pff{a_{i_1},\ldots,a_{i_k}}_b$.  Let $L=K(B)$.
If $\dim B_0=1$ then  $W_q(L/F)=W_q(K/F)$.
If $\dim B_0>1$ then
$W_q(L/F)=W_q(K/F)+B_0\otimes W_q(F)$.
\end{coro}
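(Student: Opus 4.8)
The plan is to mimic the proof of Corollary~\ref{quadpfister}, substituting Proposition~\ref{pfker}(ii) for part~(i) and invoking Lemma~\ref{mamo} where the quadratic-Pfister argument used \cite[Prop.~2.11]{ELW}. First I would dispose of the case $\dim B_0=1$: here $q_{\ani}\cong\qf{1}$, so by the discussion preceding Proposition~\ref{pfker} we have $F(B_0)=F$, whence $W_q(F(B)/F)=0$; since $L=K(B)$ and $F(B)$ is purely transcendental over $F(B_0)=F$, any $\phi\in W_q(L/F)$ already becomes hyperbolic over $K$ (the function-field variable being transcendental and anisotropy being preserved), giving $\phi\in W_q(K/F)$. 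More carefully: $L=K(B)$ is a purely transcendental extension of $K(B_0)$, and over $K$ the form $q$ has anisotropic part of dimension $\dim(q_K)_{\ani}$, which is $1$ exactly when $\dim B_0 = 1$ forces $(q_K)_{\ani}\cong\qf 1$; then $K(B_0)=K$ and $W_q(L/F)=W_q(K/F)$.

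For the main case $\dim B_0>1$, the inclusion $W_q(K/F)+B_0\otimes W_q(F)\subseteq W_q(L/F)$ is clear since $(B_0)_L$ is isotropic as a subform of the isotropic $q_L$, hence metabolic, so $B_0\otimes W_q(F)$ dies over $L$; and $W_q(K/F)\subseteq W_q(L/F)$ trivially. For the reverse inclusion, take an anisotropic $\phi$ over $F$ with $\phi\in W_q(L/F)$. Apply the excellence of $K/F$ (the Main Theorem, part~(i)) to get $\psi$ over $F$ with $(\phi_K)_{\ani}\cong\psi_K$; if $\dim\psi=0$ we are done. Otherwise $\psi_L\sim\phi_L=0$, so $\psi_K\in W_q(L/K)=W_q(K(B)/K)$, and since $(q_K)_{\ani}\cong(B_0)_K$ has dimension $\geq 2$, Proposition~\ref{pfker}(ii) applied over $K$ yields a nonsingular quadratic form $\rho$ over $K$ with $\psi_K\cong (B_0)_K\otimes\rho$. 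Now $\psi$ is defined over $F$, so $(B_0)_K\otimes\rho$ is defined over $F$, and Lemma~\ref{mamo} (with $B=B_0$) produces a nonsingular $\rho_0$ over $F$ with $\psi_K\cong(B_0\otimes\rho_0)_K$. Then $\phi$ and $B_0\otimes\rho_0$ have isometric anisotropic parts over $K$, so $\phi-B_0\otimes\rho_0\in W_q(K/F)$, giving $\phi\in W_q(K/F)+B_0\otimes W_q(F)$.

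The main obstacle I anticipate is the bookkeeping around the function fields: one must be careful that $W_q(L/K)$ with $L=K(B)$ really equals $W_q(K(B_0)/K)$ and that Proposition~\ref{pfker}(ii) is legitimately applicable over the base field $K$ — this requires knowing that the anisotropic part of $q$ over $K$ is again the bilinear Pfister form $(B_0)_K$ with $\dim(B_0)_K=\dim B_0>1$, i.e.\ that no further degeneration happens over $K$; but this is exactly the hypothesis $(q_K)_{\ani}\cong\pff{a_{i_1},\ldots,a_{i_k}}_K$ built into the statement, so it is available. A secondary subtlety is checking that Lemma~\ref{mamo} applies with $B$ replaced by the bilinear Pfister form $B_0$ rather than a general bilinear form; but Lemma~\ref{mamo} is stated precisely for bilinear Pfister forms, so this is immediate. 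Everything else is a routine transcription of the proof of Corollary~\ref{quadpfister}.
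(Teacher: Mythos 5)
Your proposal is correct and follows essentially the same route as the paper's proof: reduce to $K(B_0)$ via the pure transcendence of $K(B)/K(B_0)$ (which settles the case $\dim B_0=1$ since then $K(B_0)=K$), then for $\dim B_0>1$ combine the excellence of $K/F$, Proposition \ref{pfker}(ii) applied over the base field $K$, and Lemma \ref{mamo} to replace the form $\rho$ over $K$ by one defined over $F$, exactly as in the paper. One small caveat: your justification of the easy inclusion, namely that $(B_0)_L$ is isotropic ``as a subform of the isotropic $q_L$,'' is not a valid inference, since a subform of an isotropic totally singular form need not be isotropic; the correct (and equally quick) reason is that $L\supseteq K(B_0)$ and $\pff{a_{i_1},\ldots,a_{i_k}}_K$, being anisotropic of dimension at least $2$, becomes isotropic over its own function field $K(B_0)$, so $(B_0)_L$ is an isotropic, hence metabolic, bilinear Pfister form and $B_0\otimes W_q(F)$ vanishes in $W_q(L)$. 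Also, your first sketch of the case $\dim B_0=1$ conflates $q_{\ani}$ over $F$ with $(q_K)_{\ani}$, but your ``more careful'' version fixes this and is the argument the paper uses.
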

\begin{proof}  By the remarks preceding Proposition \ref{pfker}, 
we have that $K(B)/K(B_0)$  is purely transcendental, hence
$W_q(L/F)=W_q(K(B_0)/F)$.  We are done if $\dim B_0=1$ as then
$K(B_0)=K$.  So let us assume $\dim B_0>1$.  Clearly,
$W_q(K/F)+B_0\otimes W_q(F)\subseteq W_q(L/F)$.  Conversely,
let $\phi$ be a form over $F$ with $\phi\in W_q(L/F)=W_q(K(B_0)/F)$.
By the excellence property of $K/F$, there exists a form
$\psi$ over $F$ with $(\phi_K)_{\ani}\cong \psi_K$.  If $\dim\psi =0$
then $\phi\in W_q(K/F)$ and we are done.  So suppose $\dim\psi >0$.  Since 
$0=\phi_{K(B_0)}=\psi_{K(B_0)}\in W_q(K(B_0))$,
it follows from Proposition \ref{pfker}
that there exists a nonsingular form
$\tau$ over $K$ with $\psi_K\cong (B_0)_K\otimes\tau$.
By Lemma \ref{mamo}, we may assume that $\tau$ is already defined
over $F$, hence we may put $\psi\cong B_0\otimes\tau$. But
then $\phi -B_0\otimes\tau\in W_q(K/F)$ and hence
$\phi\in W_q(K/F)+B_0\otimes W_q(F)$ as desired.
\end{proof}

Theorem \ref{main-thm-a} essentially says that $W_q(K/F)$ is additively
generated by forms $\qf{1,t}_b\otimes [c,d]$ with $t\in K^{*2}$ and
$c,d\in F$.  We next show that we don't need all $t\in K^{*2}$ for this
to be true.  Recall that a subset $T\subset F$ is called $2$-independent
if for all finite subsets $\{ t_1,\ldots,t_n\}\subseteq T$ with $n\in\NN$
and $t_i\neq t_j$ for $i\neq j$, one has
$$[F(\sqrt{t_1},\ldots,\sqrt{t_n}):F]=[F^2(t_1,\ldots, t_n):F^2]=2^n\ .$$
A $2$-independent set $T$ with $F^2(T)=F$ is called a $2$-basis of $F$.

If $K/F$ is a purely inseparable exponent $1$ extension, then clearly
there exists a $2$-independent set $T\subseteq F$ with 
$K=F(\sqrt{t}\,|\,t\in T)$.

\begin{coro}\label{2-ind} Let $K/F$ be a purely inseparable 
exponent $1$ extension, and let $T\subseteq F$ be a $2$-independent 
set such that $K=F(\sqrt{t}\,|\,t\in T)$.  Then
$$W_q(K/F)=\sum_{t\in T}\qf{1,t}_b\otimes W_q(F)\ .$$
\end{coro}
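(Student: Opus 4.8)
The plan is to reduce, via Theorem~\ref{main-thm-a}, to a statement about finite subextensions and then to settle that by an induction built on Corollary~\ref{bilpfister}. First note the inclusion ``$\supseteq$'' is immediate: for $t\in T$ one has $\sqrt{t}\in K$, so $(\qf{1,t}_b)_K\cong(\qf{1,1}_b)_K$ is metabolic and hence $\qf{1,t}_b\otimes W_q(F)\subseteq W_q(K/F)$. For ``$\subseteq$'', Theorem~\ref{main-thm-a} gives $W_q(K/F)=J_{K/F}=\sum_{s\in K^{*2}}\qf{1,s}_b\otimes W_q(F)$, so it suffices to show $\qf{1,s}_b\otimes q\in\sum_{t\in T}\qf{1,t}_b\otimes W_q(F)$ for every $s\in K^{*2}$ and every $q\in W_q(F)$. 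Writing $s=z^2$ with $z\in K^{*}$, the element $z$ lies in $K_0:=F(\sqrt{t_1},\dots,\sqrt{t_n})$ for some finite subset $\{t_1,\dots,t_n\}\subseteq T$; this subset is again $2$-independent, $K_0^2\subseteq F$, and $s\in K_0^{*2}$, so $\qf{1,s}_b\otimes q\in J_{K_0/F}\subseteq W_q(K_0/F)$ by Lemma~\ref{J-in-kern}. Thus everything comes down to the finite case
\[
W_q(F(\sqrt{t_1},\dots,\sqrt{t_n})/F)=\sum_{i=1}^{n}\qf{1,t_i}_b\otimes W_q(F)
\]
for a $2$-independent set $\{t_1,\dots,t_n\}$; given this, $\qf{1,s}_b\otimes q$ lands in $\sum_{i=1}^n\qf{1,t_i}_b\otimes W_q(F)\subseteq\sum_{t\in T}\qf{1,t}_b\otimes W_q(F)$, as wanted.

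I would prove the finite case by induction on $n$. For $n=1$, the field $F(\sqrt{t_1})$ is the function field of the $1$-fold bilinear Pfister form $\qf{1,t_1}_b$ (Remark~\ref{quadker}), whose associated totally singular form $\qf{1,t_1}$ is anisotropic since $t_1\notin F^2$; hence Proposition~\ref{pfker}(ii) yields $W_q(F(\sqrt{t_1})/F)=\qf{1,t_1}_b\otimes W_q(F)$. For the inductive step ($n\ge 2$), set $K:=F(\sqrt{t_1},\dots,\sqrt{t_{n-1}})$, a purely inseparable extension of $F$ of exponent $1$, and $B:=\qf{1,t_n}_b$, a $1$-fold bilinear Pfister form over $F$ with associated totally singular form $q=\qf{1,t_n}$. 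Because $\{t_1,\dots,t_n\}$ is $2$-independent we have $[K(\sqrt{t_n}):F]=2^n>2^{n-1}=[K:F]$, so $\sqrt{t_n}\notin K$, i.e. $t_n\notin K^{2}$; hence $q_K$ is anisotropic, so in the notation of Corollary~\ref{bilpfister} we have $B_0=\qf{1,t_n}_b$ with $\dim B_0=2>1$, and $L:=K(B)=K(\sqrt{t_n})=F(\sqrt{t_1},\dots,\sqrt{t_n})$ (Remark~\ref{quadker}). Corollary~\ref{bilpfister} then gives $W_q(L/F)=W_q(K/F)+\qf{1,t_n}_b\otimes W_q(F)$, and the inductive hypothesis $W_q(K/F)=\sum_{i=1}^{n-1}\qf{1,t_i}_b\otimes W_q(F)$ closes the induction.

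The only genuine idea is the identification $F(\sqrt{t_1},\dots,\sqrt{t_n})=K(\qf{1,t_n}_b)$: stripped one square root at a time, a finite inseparable multiquadratic extension is the function field of a $1$-fold bilinear Pfister quadric over the one-step-smaller multiquadratic field, which is exactly the situation handled by Corollary~\ref{bilpfister} (and, underneath it, by the descent Lemma~\ref{mamo}); the $2$-independence hypothesis is used only to guarantee that $\qf{1,t_n}$ remains anisotropic over $K$, so that $B_0$ is truly $2$-dimensional. I would expect the bulk of the work to be routine bookkeeping with these function-field identifications and with the passage to a finite subextension. It is worth noting that a purely formal attempt to shrink the generating set $\{\qf{1,s}_b\otimes q: s\in K^{*2}\}$ of $W_q(K/F)$ using only the Witt-ring identities $\qf{1,uv}_b=\qf{1,u}_b+\qf{1,v}_b+\pff{u,v}_b$ and $\qf{1,u+v}_b=\qf{1,u}_b+\qf{1,v}_b+\qf{1,uv(u+v)}_b$ (for $u+v\ne 0$) runs in circles, so the descent through the quadric really is needed.
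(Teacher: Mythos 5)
Your proof is correct and follows essentially the same route as the paper: reduce to a finite subextension and then run an induction on the number of square roots, with Proposition~\ref{pfker} and Remark~\ref{quadker} handling the base case and Corollary~\ref{bilpfister} (with $2$-independence guaranteeing $\dim B_0>1$) handling the inductive step. The only cosmetic difference is how you pass to the finite case (via the generators of $J_{K/F}=W_q(K/F)$ from Theorem~\ref{main-thm-a}, rather than by noting that an anisotropic form in $W_q(K/F)$ already becomes hyperbolic over a finitely generated subextension), which is an equally valid reduction.
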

\begin{proof}  Obviously, 
$\sum_{t\in T}\qf{1,t}_b\otimes W_q(F)\subseteq W_q(K/F)$ as $t\in T$
implies $t\in K^{*2}$. 

For the reverse inclusion,
let $q\in W_q(K/F)$ be anisotropic.  It is clear that
there exists already a finite subset $T'\subset T$ such that for
$K'=F(\sqrt{t}\,|\,t\in T')$, we have $q\in W_q(K'/F)$.
By invoking Proposition \ref{pfker} and Remark \ref{quadker}
together with Corollary \ref{bilpfister}, 
an easy induction on the cardinality $|T'|$ shows that
$W_q(K'/F)= \sum_{t\in T'}\qf{1,t}_b\otimes W_q(F)$.
Hence $q\in W_q(K'/F)\subseteq \sum_{t\in T}\qf{1,t}_b\otimes W_q(F)$ 
as desired.
\end{proof}

Corollary \ref{2-ind} together with Corollary \ref{quadpfister} (in the case
$n=1$, see Remark \ref{quadker}) immediately imply the result by
Aravire and Laghribi \cite{AL} mentioned in the introduction.

\begin{coro} Let $K=F(\sqrt{a_1},\ldots ,\sqrt{a_n})$, $a_i\in F^*$,
and $L=K(\wp^{-1}(b))$ for some $b\in F$.   Then
$$\begin{array}{rcl}
W_q(K/F) & = & \sum_{i=1}^n\qf{1,a_i}_b\otimes W_q(F)\\[1ex]
W_q(L/F) & = & W(F)\otimes [1,b] +\sum_{i=1}^n\qf{1,a_i}_b\otimes W_q(F)
\end{array}$$
\end{coro}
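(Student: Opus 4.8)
The plan is to read off both identities from Corollary \ref{2-ind} and Corollary \ref{quadpfister}, just as announced in the remark preceding the statement; the one point that genuinely needs an argument is that the given generating set $\{a_1,\ldots,a_n\}$ of $K/F$ need not be $2$-independent.

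First I would dispose of $W_q(K/F)$. One has $K^2=F^2(a_1,\ldots,a_n)$, a finite extension of $F^2$, so after renumbering we may fix $k\geq 0$ with $a_1,\ldots,a_k$ being $2$-independent and $F^2(a_1,\ldots,a_k)=F^2(a_1,\ldots,a_n)$. Setting $E:=F(\sqrt{a_1},\ldots,\sqrt{a_k})\subseteq K$, we get $E^2=F^2(a_1,\ldots,a_k)=K^2$; since $[E:F]=[E^2:F^2]$ and $[K:F]=[K^2:F^2]$ (the Frobenius being an isomorphism onto the field of squares), this forces $[E:F]=[K:F]$ and hence $E=K$. If $k=0$ then $K=F$ and all the kernels in question are zero, so assume $k\geq 1$. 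Corollary \ref{2-ind}, applied to the $2$-independent set $T=\{a_1,\ldots,a_k\}$ with $K=F(\sqrt{t}\,|\,t\in T)$, then gives
$$W_q(K/F)=\sum_{i=1}^k\qf{1,a_i}_b\otimes W_q(F)\subseteq\sum_{i=1}^n\qf{1,a_i}_b\otimes W_q(F)\ .$$
For the reverse inclusion, each $a_i$ is a square in $K$, hence $\qf{1,a_i}_b\otimes W_q(F)\subseteq J_{K/F}\subseteq W_q(K/F)$ by Lemma \ref{J-in-kern}. This establishes the first identity.

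Next I would treat $W_q(L/F)$, writing $\pi:=[1,b]$, a $1$-fold quadratic Pfister form over $F$. If $b\in\wp(F)$, then $\wp^{-1}(b)\in F$, so $L=K$, while $[1,b]$ is hyperbolic and thus $W(F)\otimes[1,b]=0$ in $W_q(F)$; the asserted formula then reduces to the first identity. So assume $b\notin\wp(F)$; then $\pi$ is anisotropic, and since $K/F$ is purely inseparable we still have $b\notin\wp(K)$, whence $L=K(\wp^{-1}(b))=K(\pi)$ by Remark \ref{quadker}. Corollary \ref{quadpfister} together with the first identity then gives
$$W_q(L/F)=W_q(K/F)+W(F)\otimes\pi=W(F)\otimes[1,b]+\sum_{i=1}^n\qf{1,a_i}_b\otimes W_q(F)\ ,$$
which is the second formula.

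The only slightly delicate step, and the one I would flag as the main obstacle, is the reduction in the second paragraph to a $2$-independent subset of $\{a_1,\ldots,a_n\}$ still generating $K$; this relies on the standard fact that $2$-independence behaves like linear independence, so that a maximal $2$-independent subset of any generating set has the same $F^2$-span and hence generates the same extension. Everything else is a direct appeal to Corollaries \ref{2-ind} and \ref{quadpfister}.
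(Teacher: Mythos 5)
Your proposal is correct and follows essentially the same route as the paper, which deduces the statement directly from Corollary \ref{2-ind} and Corollary \ref{quadpfister} (via Remark \ref{quadker} for the case $n=1$). The only difference is that you explicitly fill in the details the paper treats as immediate: passing to a $2$-independent subset of $\{a_1,\ldots,a_n\}$ generating $K$ (with the reverse inclusion from Lemma \ref{J-in-kern}), and the degenerate cases $K=F$ and $b\in\wp(F)$, all of which you handle correctly.
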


\begin{remark}  A different way of
deriving the previous corollary for $W_q(K/F)$
from our Theorem \ref{main-thm-a} is as follows.  It suffices to show
that each form of type $\qf{1,t}_b\otimes [c,d]$, $t\in K^{*2}$, 
$c,d\in F$, can be written in $W_q(F)$ 
as a  sum of forms of type $\qf{1,a_i}_b\otimes q_i$ for suitable
nonsingular quadratic forms $q_i$ over $F$.

Now $t\in F^2(a_1,\ldots,a_n)$, so for each $I\subseteq\{ 1,\ldots, n\}$
there exist $x_I\in F$ such that
$t=\sum_{I\subset \{ 1,\ldots, n\}}(\prod_{i\in I}a_i)x_I^2$.
The desired result now follows by a straightforward 
induction on $n$ using the following
relations in $W_q(F)$, where we assume $w\in F$, $u,v,x\in F^*$
with $u+v\neq 0$ in the third relation:
\begin{itemize}
\item $\qf{1,1}_b\otimes [1,w]=0$;
\item $\qf{1,ux^2}_b\otimes [1,w]=\qf{1,u}_b\otimes [1,w]$;
\item $\qf{1,u+v}_b\otimes [1,w]=
\qf{1,u}_b\otimes [1,\frac{wu}{u+v}]+\qf{1,v}_b\otimes [1,\frac{wv}{u+v}]$;
\item $\qf{1,uv}_b\otimes [1,w]=\qf{1,u}_b\otimes [1,w]+
\qf{1,v}_b\otimes [u,\frac{w}{u}]$.
\end{itemize}
\end{remark}

\bibliographystyle{amsplain}

\begin{thebibliography}{00.}

\bibitem{AL} R.~Aravire, A.~Laghribi, \textit{Results on Witt 
kernels of quadratic forms for multi-quadratic extensions},
Proc.\ Amer.\ Math.\ Soc.\  \textbf{141}  (2013),  no.\ 12, 4191--4197. 


\bibitem{EKM} R.~Elman, N.~Karpenko, A.~Merkurjev,
\textit{The algebraic and geometric theory of quadratic forms}.
AMS Colloquium Publications, 56. 
Amer.\ Math.\ Soc., Providence, RI, 2008.

\bibitem{ELW} R.~Elman, T.Y.~Lam, A.~Wadsworth,              
\textit{Amenable fields and Pfister extensions},  
Proc.\ of Quadratic Forms Conference, Queen's Univ., Kingston, Ont., 1976
(ed.\ G.~Orzech). Queen's Papers in Pure and Applied Mathematics
No.\ \textbf{46} (1977) 445--491.

\bibitem{H} D.W.~Hoffmann, \textit{Diagonal forms of degree $p$ 
in characteristic $p$},  Algebraic and arithmetic theory of quadratic forms,  
135--183, Contemp. Math., \textbf{344}, Amer. Math. Soc., Providence, RI, 2004.

\bibitem{H1} D.W.~Hoffmann, \textit{Witt kernels of bilinear forms for
algebraic extensions in characteristic $2$}, 
Proc.\ Amer.\ Math.\ Soc.\ \textbf{134} (2005), 645--652.

\bibitem{HL} D.W.~Hoffmann, A.~Laghribi, \textit{Quadratic forms and 
Pfister neighbors in characteristic $2$}.  
Trans.\ Amer.\ Math.\ Soc.\  \textbf{356}  (2004),  no.\ 10, 4019--4053.


\bibitem{K} K.~Kato, \textit{Symmetric bilinear forms, quadratic forms and 
Milnor $K$-theory in characteristic two}, Invent. math. \textbf{66} (1982),
493--510. 

\bibitem{L}  A.~Laghribi, \textit{Witt kernels of function field
extensions in characteristic $2$},
J.\ Pure Appl.\ Algebra  \textbf{199}  (2005),  no.\ 1-3, 167--182.

\bibitem{MM} P.~Mammone, R.~Moresi, \textit{Formes quadratiques, 
alg\`ebres \`a division et extensions multiquadratiques ins\'eparables}.
Bull.\ Belg.\ Math.\ Soc.\ Simon Stevin  \textbf{2}  (1995),  
no.\ 3, 311--319.

\end{thebibliography}

\end{document}